\documentclass[12pt]{amsart}
\usepackage{amsmath,amssymb}
\usepackage{color}
\theoremstyle{plain}
\newtheorem{thm}{Theorem}[section]
\newtheorem{lem}[thm]{Lemma}

\newtheorem{cor}[thm]{Corollary}
\newtheorem{pro}[thm]{Proposition}
\newtheorem{ex}[thm]{Example}
\newtheorem{que}[thm]{Question}
\newtheorem{prob}[thm]{Problem}

\newtheorem{df}[thm]{Definition}
\newtheorem{rem}[thm]{Remark}

\begin{document}
\title{Discrete homogeneity and ends of manifolds}

\author{Vitalij A.~Chatyrko and Alexandre Karassev}

\begin{abstract} It is shown that a connected non-compact metrizable manifold of dimension $\ge 2$ is strongly discrete homogeneous if and only if it has one end (in the sense of Freudenthal compactification).
\end{abstract}

\makeatletter
\@namedef{subjclassname@2020}{\textup{2020} Mathematics Subject Classification}
\makeatother

\keywords{discrete homogeneity, manifold, ends}

\subjclass[2020]{Primary 57S05; Secondary 54F15, 54D35}

\maketitle

\section{Introduction} All spaces are assumed to be metrizable, and all maps continuous.
Recall  that a topological space $X$ is called {\it $n$-homogeneous} for a positive integer $n$ if
for any two  subsets $A$ and $B$ of $X$ with $|A|=|B|=n$  there exists a homeomorphism $f\colon X \to X$, such that $f(A)=B$ \cite{Bu}. Here, for a set $S$ by $|S|$ we denote its cardinality. Further, $X$ is called {\it strongly $n$-homogeneous} if
any bijection  $f\colon A \to B$ between  two such subsets extends to a homeomorphism  $f\colon X \to X$.

 Recall that a family of subsets $\mathcal D$ of a topological space $X$ is called {\it discrete} if each point $x \in X$ has a neighbourhood  $Ox$ which  intersects at most one  set from $\mathcal D$. If all elements of $\mathcal D$ are singletons, we obtain a definition of a {\it discrete subset} of $X$. Note that this definition implies that all discrete subsets of $X$ are closed in $X$.

The following stronger properties of topological spaces were considered in \cite{ChK}.
\begin{df}\label{main_def}  A space $X$ is called
\begin{itemize}
\item discrete homogeneous (br. DH) if for any two discrete subsets $A$ and $B$ of $X$ with $|A|=|B|$
 there exists a homeomorphism
$f\colon X\to X$ such that $f(A)= B$;

\item  strongly discrete homogeneous (br. sDH) if for any two discrete subsets $A$ and $B$ of $X$ and any bijection $f\colon A\to B$ this bijection extends to a homeomorphism of $X$ onto itself.
\end{itemize}
\end{df}

Note that a compact space is (strongly) discrete homogeneous iff it is (strongly) $n$-homogeneous for any $n.$ However, any infinite discrete space $X$ is strongly $n$-homogeneous for every positive integer $n$, but not discrete homogeneous.

We say that a homeomorphism $f\colon X\to X$ is \textit{supported} on a set $U$ if $f$ is the identity on $X\setminus U.$ Recall also that a space $X$ is called {\it strongly locally homogeneous} (br. SLH) if it has a basis of open sets $\mathcal B$ such that for every $U\in\mathcal B$ and any two points $x$ and $y$ in $U$ there exists a homeomorphism $f\colon X\to X$ supported on $U$ and such that $f(x) =y$ \cite{F}.

By an {\it $m$-manifold} we mean a space locally homeomorphic to $\mathbb R^m$. Note that any manifold is strongly locally homogeneous.

\begin{rem}\label{slh} {\rm It was shown in \cite{Ba} that  a connected strongly locally homogeneous space, no two-point subset of which separates it, is strongly $n$-homogeneous for any $n$. Therefore any compact connected manifold is strongly $n$-homogeneous for any $n$, and hence strongly discrete homogeneous.}
\end{rem}

It was shown in \cite[Theorem~3.3]{P} that $\mathbb R^n$ is strongly discrete homogeneous and that in fact the homeomorphism mapping one discrete subset of $\mathbb R^n$ to another can be chosen to be isotopic to identity.
In \cite{ChK} we independently obtained the first part of this result.  However, it was also observed in \cite{ChK} that the $2$-manifold $\mathbb R\times S^1$, where  $S^1$ is the circle, is not discrete homogeneous, and the following question was posed.

 \begin{que} Which metrizable manifolds are DH or sDH?
 \end{que}

 In this paper we answer the above question.

\section{Ends of spaces}

\begin{df}\label{general_cont} A space is called a generalized continuum if it is locally compact, $\sigma$-compact, connected, and locally connected space.
\end{df}

Note that in the literature one can find a slightly different definition of generalized continuum. Namely, the local connectedness condition is often omitted.

\begin{ex} Any connected manifold is a generalized continuum.	
\end{ex}

Let $X$ be a generalized continuum as in Definition~\ref{general_cont}.
There exists a sequence (generally speaking, not unique) of compact subsets $\{ X_n\}_{n=1}^\infty$ of $X$ such that $X_n\subset {\rm int} X_{n+1}$ for all $n=1,2, \dots$  and $\cup_{n=1}^\infty X_n = X$.

\begin{df}
By an end of $X$ with respect to $\{ X_n\}_{n=1}^\infty$ we mean a sequence of subsets $\{ Y_n\}_{n=1}^\infty$ such that for all $n$  the set $Y_n$ is a non-empty connected component of $X\setminus X_n$, and $Y_{n+1}\subset Y_n$.
\end{df}
It is easy to see that if a generalized continuum is compact than it has no ends.

We call a  connected component $C$ of $X \setminus X_n$  {\it bounded} if there exists $m$ such that  $C \subset X_m.$ Otherwise, $C$ is called {\it unbounded}.

\begin{rem}\label{remark_components} {\rm Below we summarize some facts about connected components of $X \setminus X_n,$  $n = 1,2, \dots $
\begin{itemize}
\item[(a)] The number of connected components $C$ of $X\setminus X_n$ such that $C \cap(X \setminus X_{n+1})\ne \varnothing$
is finite.
\item[(b)] If $X$ is non-compact, there exists at least one unbounded component of $X \setminus X_n$.

\item[(c)] If $X$ is non-compact, each connected component of $X \setminus X_{n+1}$  is contained in some connected component of $X \setminus X_n$, and each unbounded component  of $X \setminus X_n$ contains at least one unbounded component of $X \setminus X_{n+1}$. This implies that any non-compact generalized continuum has at least one end.
\end{itemize}}
\end{rem}

\begin{rem}
\begin{itemize}
\item[(a)] A generalized continuum has no ends iff it is compact.
\item[(b)] A generalized continuum has at most one end iff each of its compact subsets is contained in a compact subset with connected complement.
\end{itemize}
\end{rem}

The collection of all ends of $X$ is denoted by $E(X)$. Let $F(X) =X\cup E(X)$ be the disjoint union of $X$ and the set of its ends. For each open set $U$ let
$$E_U = \{ E=\{ Y_n\}_{n=1}^\infty \in E(X) \colon Y_n\subset U\mbox{ for some }n\}.$$
 One can introduce a base of topology on $F(X)$ consisting of sets of the form $E_U\cup U$ for all open subsets $U$ of $X.$ It turns out that $F(X)$, endowed with this topology, is a compactification of $X$, called the Freudenthal compactification \cite{F}.  Moreover, $F(X)$ and $E(X)$ do not depend on the choice of the sequence $\{ X_n\}_{n=1}^\infty.$ Note that if $|E(X)| = 1$ then $F(X)$ is the Alexandrov's one-point compactification of $X$.

The following fact is well-known. We include its proof for completeness.

\begin{pro}\label{extension} Let $X$ be a generalized continuum and $f\colon X\to X$ a homeomorphism. There exists a homeomorphism $\overline{f}\colon F(X)\to F(X),$ extending $f.$
\end{pro}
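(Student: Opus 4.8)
The plan is to fix one exhaustion $\{X_n\}_{n=1}^\infty$ (compact, $X_n\subset {\rm int}\, X_{n+1}$, $\cup_n X_n=X$) and build $\overline f$ with respect to it, declaring $\overline f|_X=f$ and defining the action on ends explicitly. Since $F(X)$ is a compactification, it is compact and Hausdorff; hence a \emph{continuous bijection} $\overline f\colon F(X)\to F(X)$ is automatically a homeomorphism, and I need not verify continuity of the inverse separately. Thus the work reduces to three tasks: (i) defining $\overline f$ on $E(X)$, (ii) checking it is a bijection, and (iii) proving continuity at the ends. Continuity at points of $X$ is free: $X$ is open in $F(X)$, the subspace topology it inherits is its original topology (a basic set $E_U\cup U$ meets $X$ in $U$), and $\overline f|_X=f$ is a homeomorphism, so $\overline f$ is continuous at each $x\in X$.

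The heart of the construction is a stabilization observation. Fix $n$. Since $f^{-1}(X_n)$ is compact, there is $M$ with $f^{-1}(X_n)\subset X_M$; then for every $k\ge M$ we have $Y_k\subset X\setminus X_k\subset X\setminus f^{-1}(X_n)$, so $f(Y_k)\subset X\setminus X_n$. As a continuous image of the connected set $Y_k$, each such $f(Y_k)$ lies in a single component of $X\setminus X_n$, and because the $Y_k$ decrease this component is independent of $k\ge M$; call it $W_n$. One checks that each $W_n$ is a non-empty component of $X\setminus X_n$ and that $W_{n+1}\subset W_n$ (both contain $f(Y_k)$ for large $k$, and a component of $X\setminus X_{n+1}$ sits inside a unique component of $X\setminus X_n$). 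Hence $\{W_n\}_{n=1}^\infty$ is an end, and I set $\overline f(\{Y_n\})=\{W_n\}$.

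For (ii), applying the identical construction to $f^{-1}$ gives a map on ends which, by a direct check, is the two-sided inverse of $\overline f$, so $\overline f$ is a bijection of $F(X)$. For (iii), I will use that, since $X$ is locally connected and each $X_m$ is compact hence closed, every $Y_m$ is open; thus the sets $E_{Y_m}\cup Y_m$ form a neighbourhood base at $\{Y_n\}$, and the $E_{W_n}\cup W_n$ form one at $\overline f(\{Y_n\})=\{W_n\}$. Given $n$, choose $k\ge M$ as above so that $f(Y_k)\subset W_n$. Then $\overline f(Y_k)=f(Y_k)\subset W_n$, and for any end $\{Y'_j\}\in E_{Y_k}$ (so $Y'_{j_0}\subset Y_k$ for some $j_0$) the tail images satisfy $f(Y'_l)\subset f(Y_k)\subset W_n$ for all $l\ge j_0$, whence the $n$-th term of $\overline f(\{Y'_j\})$ equals $W_n$ and so $\overline f(\{Y'_j\})\in E_{W_n}$. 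Therefore $\overline f(E_{Y_k}\cup Y_k)\subseteq E_{W_n}\cup W_n$, which gives continuity at $\{Y_n\}$.

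I expect the main obstacle to be precisely this stabilization step together with the accompanying neighbourhood bookkeeping: one must confirm that the component $W_n$ absorbing the tail images $f(Y_k)$ is well defined and eventually constant in $k$, that the resulting sequence $\{W_n\}$ is genuinely an end, and that the basic neighbourhoods $E_{Y_k}\cup Y_k$ are carried inside $E_{W_n}\cup W_n$. Once these topological verifications are in place, the remainder—bijectivity and the reduction to a continuous bijection of a compact Hausdorff space—is formal.
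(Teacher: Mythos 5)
Your proposal is correct and takes essentially the same route as the paper: extend $f$ by pushing each end forward along $f$. The paper defines $\overline f(\{Y_n\})=\{f(Y_n)\}$ as an end with respect to the transported exhaustion $\{f(X_n)\}$ and appeals to the independence of $E(X)$ from the choice of exhaustion, whereas your stabilized components $W_n$ are precisely the explicit form of that identification, together with a worked-out version of the continuity check the paper declares ``straightforward.''
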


\begin{proof} For all $x\in X$ put $\overline{f}(x) = f(x).$ Let $\{ X_n\}_{n=1}^\infty$  be a sequence of compact subsets $\{ X_n\}_{n=1}^\infty$ of $X$ such that $X_n\subset {\rm int} X_{n+1}$ for all $n=1,2, \dots$  and $\cup_{n=1}^\infty X_n = X$. Then $\{ f(X_n)\}_{n=1}^\infty$ is a sequence with the same properties. Further, if $E=\{ Y_n\}_{n=1}^\infty$ is an end of $X$ with respect to $\{ X_n\}_{n=1}^\infty$ then $\{ f(Y_n)\}_{n=1}^\infty$ is an end of $X$ with respect to
 $\{f(X_n)\}_{n=1}^\infty.$ We let $\overline{f}(E) = \{f(Y_n)\}_{n=1}^\infty.$ It is straightforward to check that $\overline{f}$ is an autohomeomorphism of $F(X).$
\end{proof}

\section{Preliminary lemmas}

Recall that a map between topological spaces is called {\it proper} if preimage of any compact subspace is compact.

\begin{lem}\label{generalposition} Let $X$ be an $m$-manifold, $m\ge 3, $ and let $Z$ be a topological sum of countable collection of unit intervals. For any proper map $f\colon Z\to X$ and for any function $\epsilon\colon Z\to (0,1)$ there exists an embedding  $g\colon Z\to X$ such that $\textrm{dist} (f(z),g(z))<\epsilon (z)$ for all $z$ in $Z.$
\end{lem}
\begin{proof} It is a well-known fact that every $3$-manifold can be triangulated. Therefore, the statement of the lemma for $m=3$
	follows from the classical general position theorem in PL-category, see e.g. \cite[Corollary 4.4]{Br}. For $m>3$ the statement follows from \cite[Topological general position lemma 1]{D} (note that the manifolds considered in \cite{D} are topological manifolds with boundary).
\end{proof}

For the proof of the following see e.g. \cite[p. 305]{E}.

\begin{lem}\label{enlargement} Let $\mathcal D$ be a discrete family in a space $X$. For each $D\in \mathcal D$ there exists an open set $U_{D}$ containing $D$ such that the family $\{ \overline{U}_D\colon D\in \mathcal D\}$ is discrete.
\end{lem}

Recall that an {\it isotopy} between two homeomorphisms $f\colon X\to X$ and $g\colon X\to X$ is a map $H\colon X\times [0,1]\to X$ such that $f(x) =H(x,0)$ and $g(x)= H(x,1)$ for all $x$ in $X$, and the map $h_t\colon X\to X$ defined by $h(x) = H(x,t)$ is a homeomorphism for all $t\in[0,1].$ We also say that $f$ is isotopic to $g$ via $H$. Further, we say that an isotopy $H$ is \textit{supported on a subset $Y$ of $X$} if $h_t$ is supported on $Y$ for all $t.$

 \begin{df} A space $X$ is {\it isotopically strongly locally homogeneous} (br. isotopically SLH) if it has a basis of open sets $\mathcal B$ such that for every $U\in\mathcal B$ and any two points $x$ and $y$ in $U$ there exists a homeomorphism $f\colon X\to X$, supported on $U$, and isotopic to the identity via an isotopy supported on $U$,  such that $f(x) =y.$
 \end{df}

\noindent Note that  the above definition implies that the sets $U\in \mathcal B$ are connected.

\begin{lem}\label{isotopyconnected} Let $O$ be an isotopically SLH connected open subset of a space $X$. For any two points $x$ and $y$ from $O$ there exists a homeomorphism $h\colon X\to X$, supported on $O$ and isotopic to the identity via an isotopy supported on $O$, such that $h(x) = y$.
\end{lem}

\begin{proof} Let $O'$ be the set of all points $z\in O$ with the following property: there exists a homeomorphism $h\colon X\to X$, supported on $O$ and isotopic to the identity via an isotopy supported on $O$, such that $h(x) = z.$ A standard argument can be used to show that $O'$ is both open and closed in $O.$ Hence $O' =O$. This implies the statement of the lemma.
\end{proof}

Since any manifold is isotopically SLH, we obtain the following corollary.

\begin{cor}\label{supported} Let $X$ be a manifold and $U$ be a connected open subset of $X$. For any $x$ and $y$ from $U$ there exists a homeomorphism $h\colon X\to X$ supported on $U$ such that $h(x) = y$, and such that $h$ is isotopic to the identity via an isotopy supported on $U.$
\end{cor}

By a {\it  path} joining two distinct points $a$ and $b$ in a space $X$ we mean an image of $[0,1]$ in $X$ under a homeomorphism $\gamma \colon [0,1]\to X$ such that $\gamma(0)=a$ and $\gamma (1)=b.$
The proof of the next fact is straightforward.

\begin{lem}\label{isotopySLH} Let $X$ be an isotopically SLH space. For any path $P$ joining two distinct points $x$ and $y$ of $X$ and any open neighbourhood $U$ of $P$ there exists a homeomorphism $h\colon X\to X$ such that $h(x)=y$ and such that $h$ is isotopic to the identity via an isotopy supported on $U.$
\end{lem}

Let $Y$ be a subspace of $X$. We say that a space $X$ is \textit{isotopically strongly $n$-homogeneous relative to $Y$} if any bijections between two $n$-element subsets of $X\setminus Y$ can be extended to an autohomeomorphism of $X$ which is isotopic to the identity via an isotopy supported on $X\setminus Y$. If, in the above definition, $Y=\varnothing$ then $X$ is called  \textit{isotopically strongly $n$-homogeneous}.

\begin{thm}\label{isotopicallyhomogeneous} Let $Y$ be a closed subspace of a space $X$. If  $X\setminus Y$ is isotopically SLH space such that no finite subset separates it, then $X$ is isotopically strongly $n$-homogeneous relative to $Y$ for any $n.$
\end{thm}

\begin{proof}  Let $A=\{a_k\}_{k=1}^n$ and $B=\{b_k\}_{k=1}^n$ be two $n$-element subsets of $X\setminus Y$. Since $X\setminus Y$ is isotopically SLH, we may assume that $A\cap B=\varnothing.$ For each $k=1,2,\dots, n$ let $O_k = \{a_k\}\cup \{b_k\} \cup (X\setminus (A\cup B\cup Y)).$ By Lemma~\ref{isotopyconnected}, for each $k$ there exists a homeomorphism $f_k\colon X\to X$, supported on $O_k$ and isotopic to the identity via an isotopy supported on $O_k$, such that $f_k(a_k) = b_k$. The homeomorphism $f= f_n f_{n-1}\dots f_1$ is isotopic to the identity via an isotopy supported on $X\setminus Y$, and such that $f(a_k) = b_k$ for all $k.$	
\end{proof}

\begin{cor}\label{isotopicallyhomogeneousmanifold} Any connected manifold (resp. any connected manifold with boundary) is isotopically strongly $n$-homogeneous for any $n$ (resp. isotopically strongly $n$-homogeneous relative to the boundary for any $n$).
\end{cor}

The following  fact is obvious.

\begin{lem}\label{piecewisedefined} Let $\mathcal U =\{U_{\alpha}\}_{\alpha\in \mathcal A}$ be a discrete collection of open subsets of a space $X$ and let $f_{\alpha}$ be homeomorphisms supported on $U_{\alpha}$, $	\alpha\in \mathcal A$. Then there exists a homeomorphism $f\colon X\to X$ such that $f|U_{\alpha} = f_\alpha.$ Moreover, if each of $f_{\alpha}$ is isotopic to the identity via an isotopy supported on $U_{\alpha}$, then $f$ is isotopic to the identity via an isotopy supported on $\bigcup\{ U_\alpha \colon \alpha \in \mathcal A\}.$
\end{lem}





\section{Discrete homogeneity}

In this section, we state and prove our results.


\begin{lem}\label{discretesubsets} Let $X$ be a generalized continuum and $E=\{Y_n\}_{n=1}^\infty$ be an end of $X$. Then there exists a countable discrete subset $A$ of $X$ such that $\overline{A}\cap E(X)= \{E\}$, where $\overline{A}$ denotes the closure of $A$ in $F(X).$
	\end{lem}
\begin{proof} For each $n=1,2,\dots$ pick a point $a_n\in Y_n$ and let $A=\{ a_n\}_{n=1}^\infty.$ It is easy to verify that $A$ satisfies the conclusion of the lemma.
	\end{proof}

\begin{pro}\label{oneend}
If, for a  generalized continuum $X$, $|E(X)| >1$ then $X$ is not discrete homogeneous.	
\end{pro}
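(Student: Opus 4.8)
The plan is to exhibit two countably infinite discrete subsets $A$ and $B$ of $X$ with $|A|=|B|=\aleph_0$ such that no self-homeomorphism of $X$ can carry $A$ onto $B$, which directly contradicts discrete homogeneity. The whole argument is powered by passing to the Freudenthal compactification. By Proposition~\ref{extension}, any homeomorphism $f\colon X\to X$ with $f(A)=B$ extends to a homeomorphism $\overline{f}\colon F(X)\to F(X)$; from the construction in that proof, $\overline{f}$ maps $X$ onto $X$ and restricts to a bijection of $E(X)$. Consequently $\overline{f}$ sends the closure of $A$ in $F(X)$ onto the closure of $B$, and intersecting with $E(X)$ shows that the set of ends lying in $\overline{A}$ and the set of ends lying in $\overline{B}$ have the same cardinality. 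I will arrange $A$ and $B$ so that these two cardinalities differ.

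The invariant I want to control is thus $L(A)=\overline{A}\cap E(X)$, the set of ends at which $A$ accumulates. Here I use that a discrete subset of $X$ has no accumulation point in $X$ (a neighborhood meeting at most one point of $A$ cannot surround an accumulation point), so it is closed and discrete; since $F(X)$ is compact and contains $X$ densely, every accumulation point in $F(X)$ of an infinite discrete set must lie in $E(X)$, and at least one such end exists. The discussion above then gives $|L(A)|=|L(B)|$ whenever some homeomorphism of $X$ sends $A$ onto $B$.

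To build the sets, fix two distinct ends $e_1=\{Y_n\}$ and $e_2=\{Z_n\}$, available since $|E(X)|>1$. Choose pairwise distinct points $a_n\in Y_n$; this is possible because each $Y_n$, being a nonempty connected open subset of a generalized continuum and hence having more than one point (a connected Hausdorff space with no isolated points), is infinite. The set $A=\{a_n\}$ escapes every compact set, since any compact $K$ is contained in some $X_m$ and then $a_n\notin K$ for $n\ge m$; hence $A$ is discrete. Using the basic neighborhoods $E_{Y_m}\cup Y_m$ of $e_1$ together with the nesting $Y_{n+1}\subset Y_n$ one sees that $A$ accumulates at $e_1$, while the fact that two distinct ends eventually occupy disjoint components of some $X\setminus X_N$ shows $A$ accumulates at no other end, so $L(A)=\{e_1\}$. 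Interleaving a sequence $b_n\in Y_n$ with a sequence $c_n\in Z_n$ (again chosen pairwise distinct) produces a discrete set $B$ with $L(B)=\{e_1,e_2\}$ and $|B|=\aleph_0=|A|$.

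If $X$ were discrete homogeneous, there would be a homeomorphism sending $A$ onto $B$, forcing $1=|L(A)|=|L(B)|=2$, a contradiction. I expect the main obstacle to be the construction step, namely verifying directly from the Freudenthal topology that the chosen points accumulate at \emph{exactly} the prescribed ends. Accumulation at $e_1$ is immediate from the nesting of the $Y_n$, but ruling out accumulation at every other end requires the separation property that distinct ends are eventually housed in disjoint components of the complements $X\setminus X_N$, and it is there that the care is needed.
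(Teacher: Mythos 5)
Your proof is correct and follows essentially the same route as the paper: both arguments extend a hypothetical homeomorphism to the Freudenthal compactification and compare the (cardinalities of the) sets of ends at which two discrete sets accumulate. The only cosmetic difference is that the paper maps a set accumulating at two ends onto one accumulating at a single end (taking $A\cup B\mapsto A$), whereas you compare $A$ with an interleaved set $B$; you also supply the explicit construction of the discrete sets that the paper merely asserts.
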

\begin{proof}
Consider $a,b\in E(X)$ such that $a\ne b$. By Lemma~\ref{discretesubsets}, there exist two discrete countable subsets $A$ and $B$ of $X$ such that $\overline{A}\cap E(X) =\{ a\}$ and $\overline{B}\cap E(X) =\{ b\}$ (here, for a subset $Y$ of $X$, by $\overline{Y}$ we denote the closure of $Y$ in $F(X)$). Suppose there is a homeomorphism $f\colon X\to X$ such that $f(A\cup B) = A$. Then, for a homeomorphism $\overline{f}\colon F(X)\to F(X)$ extending $f$ we have $\overline{f}(a)=a=\overline{f}(b)$, contradiction.
\end{proof}

For a metric space $X$, $x\in X$ and $r\ge 0$ we let $B(x,r)= \{y\in X\colon {\rm dist} (x,y) \le r\}.$ For a collection $\mathcal S$ of sets by $\bigcup \mathcal S$ we denote the union of all sets in $\mathcal S$.

\begin{thm}\label{manifold_3} An $m$-dimensional connected  manifold $M$, $m>2$, is strongly discrete homogeneous if and only if it has at most one end.
	\end{thm}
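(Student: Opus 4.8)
The plan is to prove both directions of the equivalence. The forward direction (strongly discrete homogeneous implies at most one end) is immediate: it follows from Proposition~\ref{oneend}, since strong discrete homogeneity trivially implies discrete homogeneity, and Proposition~\ref{oneend} shows that more than one end precludes discrete homogeneity. So the entire substance of the theorem lies in the converse: assuming $M$ has at most one end, I must construct, for any two discrete subsets $A$ and $B$ of $M$ and any bijection $f_0\colon A\to B$, an extension to a homeomorphism of $M$ onto itself.

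\medskip

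First I would reduce to the case of countable discrete sets: since $M$ is $\sigma$-compact and a discrete subset can meet each compact $X_n$ in only finitely many points, every discrete subset of $M$ is at most countable, so I may enumerate $A=\{a_1,a_2,\dots\}$ and $B=\{b_1,b_2,\dots\}$ with $b_i=f_0(a_i)$. The core idea is to realize the prescribed bijection one point-pair at a time by an isotopy supported in a region that avoids the finitely many already-fixed points, and then to verify that the infinite composition of these homeomorphisms converges to a well-defined homeomorphism of $M$. The basic local tool is strong local homogeneity of manifolds together with the standard fact that in a connected $m$-manifold with $m>2$, any two points can be joined by an arc (a path in the sense defined just before the theorem), and one can find a homeomorphism of $M$, supported in an arbitrarily small connected neighborhood of that arc, carrying the first point to the second while fixing everything outside. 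This is where the hypothesis $m>2$ and the one-end condition enter critically: one end guarantees that $M\setminus X_n$ has, for large $n$, a single unbounded component (by Remark~\ref{remark_components}), so there is ``enough room'' to push points out toward the unique end and thereby route the supporting arcs so that they avoid any prescribed finite set of obstacle points.

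\medskip

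The construction I envisage proceeds by an inductive back-and-forth scheme. At stage $k$ I will have built a homeomorphism $h_k$ of $M$ with $h_k(a_i)=b_i$ for all $i\le k$, arranged so that $h_k$ agrees with $h_{k-1}$ outside a small set and so that the composition stabilizes locally. Concretely, to send the next pair correctly I choose an embedded arc from the current image of $a_{k+1}$ to the target $b_{k+1}$ that misses the finitely many points $b_1,\dots,b_k$ already placed and misses a large compact set chosen to control convergence; using $m>2$ such an arc exists (arcs have codimension $\ge 2$, so they can be perturbed off any finite set and any lower-dimensional obstruction), and I apply a homeomorphism supported in a thin tubular neighborhood of it. To guarantee that $h=\lim_k h_k$ and its inverse are continuous, I will insist that the supports shrink and that each $h_k$ is identity outside $X_{n_k}$ for a rapidly increasing sequence $n_k$, so that on each compact set only finitely many of the $h_k$ differ from the limit; this is the standard ``infinite swindle / shrinking support'' argument that makes the countable composition a homeomorphism.

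\medskip

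The main obstacle I anticipate is controlling the interaction between the two discrete sets when they interleave or accumulate at the single end, and simultaneously ensuring both that arcs avoid the already-fixed finite sets and that the supports shrink fast enough for convergence. In particular, I must handle the possibility that a point $a_i$ equals some $b_j$ (the sets $A$ and $B$ may overlap), which forces care in the order of the back-and-forth so that no point is moved after it has been correctly placed; the standard fix is to alternate between extending $f_0$ on the next unmatched element of $A$ and correctly hitting the next unclaimed element of $B$, always routing through the unique unbounded end to find free room. Verifying that the one-end hypothesis really does provide this free room uniformly — i.e. that after fixing any finite configuration there is still a single unbounded component into which points can be pushed, so the routing arcs can always be found disjoint from the obstacles — is the crux, and it is precisely here that the dimension restriction $m>2$ and Remark~\ref{remark_components}(c) must be combined. (The excluded case $m=2$ presumably requires a separate, more delicate argument, which is why this theorem is stated only for $m>2$.)
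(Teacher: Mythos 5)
Your proposal follows essentially the same route as the paper: use the one‑end condition (via Remark~\ref{remark_components}) to join each pair $a_k, b_k$ by an arc that, as $k\to\infty$, escapes every compact set, use general position in dimension $m>2$ to make these arcs pairwise disjoint and avoid all the other points of $A\cup B$, and realize the bijection as a locally finite composition of homeomorphisms supported on a discrete family of neighbourhoods of the arcs. One wording slip worth fixing: for convergence you need the $k$-th elementary move to be the identity \emph{on} an exhausting compact set $X_{n_k}$ (i.e.\ its support is pushed out toward the end), not ``identity outside $X_{n_k}$'' as written --- though the surrounding sentences make clear this is what you intend.
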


\begin{proof}
	The ``only if" part follows from Proposition~\ref{oneend}. For the other direction, let $M$ be an $m>2$ manifold with $|E(M)|\le 1$. If $M$ has no ends, then by Remark~\ref{remark_components}(c) $M$ is compact, and hence by Remark~\ref{slh} strongly discrete homogeneous. Assume now that $M$ has exactly one end.  Fix a metric on $M$ in which every closed bounded set is compact \cite{V}. Fix a point $O\in M$ and let $B(r)=B(O,r).$  Let  $A$ be a discrete countable subsets of $M$ so that  $A = \{a_k\colon k \geq 1\}$. There exists a sequence of positive numbers $s_1 < s_2 < \dots < s_k < \dots$ such that $\displaystyle \lim_{k\to\infty} s_k =\infty.$ and $A\cap \mathrm{Bd}\, B(s_k)=\varnothing$ for all $k.$ Since $M$ is connected, $\mathrm{Bd}\, B(s_k)\ne \varnothing$, and we can pick a point $b_k\in \mathrm{Bd}\, B(s_k)$, $k=1,2,\dots$ Clearly, the set $B=\{b_k\colon k \geq 1\}$ is discrete. Moreover, $A\cap B=\varnothing,$ and $A\cup B$ is discrete. It is easy to see that to prove the theorem it is sufficient to  construct an autohomeomorphism of $M$  that extends a bijection sending $a_k$ to $b_k$, $k =1,2, \dots.$

	 For  each $k$ let
	$\mathcal P_k$ be a collection of all  paths joining $a_k$ and $b_k$ in $M$ that avoid all other $a$'s and $b$'s. Let
	$$r_k = \sup\{ r \ge 0 \colon \exists P\in\mathcal P_k \mbox{ such that } P\cap B(r) = \varnothing \}.$$

 Since $M$ has only one end, for any $r>0$ the subspace  $M\setminus B(r)$ has only one unbounded component
 (see Remark~\ref{remark_components}(c)).  Further, local connectedness of $M$ implies  the existence of $r'>r$ such that all bounded components of $M\setminus B(r)$ are contained in $B(r')$ (see Remark~\ref{remark_components}(a)). Since $A\cup B$ is discrete, the set $B(r')$ contains only finitely many points from $A\cup B.$ Therefore there exists $k'$ such that all pairs $a_k$, $b_k$ with $k\ge k'$ are in the unbounded component of $M\setminus B(r).$ It follows that
 $\displaystyle \lim _{k\to \infty} r_k = \infty.$

Using Lemma~\ref{enlargement}, for each $k$ choose open connected neighbourhoods $U_k$ and $V_k$ of $a_k$ and $b_k$, respectively, such that $U_k\cap (A\cup B) = \{a_k\}$, $V_k\cap (A\cup B) = \{b_k\},$ and the family $\{ U_k, V_k\}_{k=1}^\infty$ is discrete.
Let  $0< \epsilon_k <1$, $k=1,2,\dots,$ be such that $B(a_k,\varepsilon_k)\subset U_k$ and $B(b_k,\varepsilon_k)\subset V_k.$ 	For each $k$ let $P_k$ be a path joining $a_k$ and $b_k$, that avoids $B(r_k/2).$ Note that this conditions implies that each path $P_i$ can intersect at most finitely many paths $P_j$. Therefore we can use Lemma~\ref{generalposition} to approximate each path $P_k$ by another path $P'_k$ with the endpoints $a'_k\in B(a_k,\varepsilon_k)$ and $b'_k\in B(b_k,\varepsilon_k)$, such that $P'_i\cap P'_j = \varnothing$ for $i\ne j$. The condition $\displaystyle \lim _{k\to \infty} r_k = \infty$ implies that the family $\{P'_k\}_{k=1}^\infty$ is discrete. By Corollary~\ref{supported}, for each $k$ there exists a homeomorphism $f_k\colon M\to M$ that maps $a_k$ to $a'_k$ and is supported on $U_k$. Similarly, for each $k$ there exists a homeomorphism $g_k\colon M\to M$ that maps $b_k$ to $b'_k$ and is supported on $V_k$. By Lemma~\ref{piecewisedefined}, the collections of homeomorphisms $\{f_k\}_{k=1}^\infty$ and $\{g_k\}_{k=1}^\infty$ define homeomorphisms $f\colon M\to M$ and $g\colon M\to M$ such that $f(a_k)=a'_k$ and $g(b_k)=b'_k$ for all $k.$ Further, appllying Lemma~\ref{enlargement}, find a discrete family $\{ O_k \}_{k=1}^\infty$ of open connected neighbourhoods of paths $P'_k.$ Applying Corollary~\ref{supported} again, for each $k$ find a homeomorphism $h_k\colon M\to M$ such that $h_k(a'_k) = b'_k$ and which is supported on  $O_k$. By Lemma~\ref{piecewisedefined},  the collection of homeomorphisms $\{h_k\}_{k=1}^\infty$ defines a homeomorphisms $h\colon M\to M$ such that $h(a'_k) = b'_k$ for all $k.$ Finally, the homeomorphism $g^{-1}hf$ sends $a_k$ to $b_k$ for each $k$, as required.
	\end{proof}

The following is a generalization of Proposition~2.14 from \cite{ChK}.

\begin{lem}\label{planewithholes} Let $\mathcal D$ be a discrete family of closed disks in $\mathbb R^2$. Let $A$ and $B$ be two subsets of $\mathbb R^2\setminus \bigcup \mathcal D$ such that $A\cup B$ is discrete in $\mathbb R^2$. For any bijection between $A$ and $B$ there exists a homeomorphism $f \colon \mathbb R^2 \to \mathbb R^2$ extending this bijection and supported on $\mathbb R^2\setminus \bigcup \mathcal D.$ Moreover, we can choose $f$ so that it is isotopic to the identity via an isotopy supported on $\mathbb R^2\setminus \bigcup \mathcal D.$
\end{lem}

\begin{proof} For each $D\in \mathcal D$, let $c_D$ denote its centre and $r_D$ denote its radius. Let $C=\{ c_D\colon D\in \mathcal D.\}$ Further, let $A=\{a_k\}_{k=1}^\infty$ and $B=\{b_k\}_{k=1}^\infty$ be two infinite discrete  subsets of $\mathbb R^2$ such that $A\cup B\subset  \mathbb R^2\setminus \bigcup \mathcal D.$ We want to extend a bijection between $A$ and $B$ that sends $a_k$ to $b_k$, $k=1,2,\dots,$ to an autohomeomorphism of $\mathbb R^2$, satisfying conditions of the lemma.

Let $E=A\cup B = \{e_k\}_{k=1}^\infty$. For each $k$ and $D\in\mathcal D$ there exist $\varepsilon_k>0$ and $\gamma_D>0$ such that $\gamma_D > r_D$ and $\{ B(e_k, \varepsilon _k)\}_{k=1}^\infty \cup \{ B(c_D,\gamma_D)\}_{D\in\mathcal D}$ is a discrete family of subsets of $\mathbb R^2$ (see Lemma~\ref{enlargement}). For each pair $i$ and $j$ such that $a_i=b_j$ there exists a unique $k=k(i,j)$ such that $a_i=e_k=b_j$. Further, for each such $k$ there exists a homeomorphism $\phi_k\colon \mathbb R^2 \to \mathbb R^2,$ supported on $\mathrm{Int}\, B(e_k, \varepsilon_k),$  and  isotopic to the identity via an isotopy supported on $\mathrm{Int}\, B(e_k, \varepsilon_k),$ such that $\phi_k (a_i)\ne b_j.$    Applying Lemma~\ref{piecewisedefined}, we obtain a homeomorphism $\phi \colon \mathbb R^2\to \mathbb R^2,$ supported on $\cup_{k=1}^\infty \mathrm{Int}\, B(e_k, \varepsilon_k),$ and isotopic to the identity via an isotopy supported on $\cup_{k=1}^\infty \mathrm{Int}\, B(e_k, \varepsilon_k),$ such that $\phi (A) \cap B =\varnothing.$ Replacing $A$ and $B$ by $\phi(A)$ and $B$, we will assume that $A\cap B=\varnothing.$

Since there are only countably many of perpendicular bisectors of segments joining pairs of points from $A\cup B \cup C$, there exists a point $O\in \mathbb R^2$ such that all distances from $O$ to the points in $A\cup B \cup C$ are distinct and positive. 

Find a ray $R$ in $\mathbb R^2$ emanating from $O$ such that $R$ contains no points from $A\cup B\cup C$. Let also $\{R_k\}_{k=1}^\infty $ be a collection of distinct rays in $\mathbb R^2$ emanating from $O$ and converging to $R$,  and such that $R_k$ contains no points from $A\cup B\cup C$ for all $k.$

 Denote by $S_r$  a circle of radius $r$ centred at $O$,  by $S_r(\epsilon)$ an $\epsilon$-neighbourhood of $S_r$, and by $\overline{S}_r(\epsilon)$ the closure of $S_r(\epsilon)$. Further, for each $p\in \mathbb R^2$ let $\|p\|=\mathrm{dist}\, (O,p).$

For each $k$ there exists $\alpha_k>0$  and $\beta_k>0$ such that:

\begin{itemize}

\item $\overline{S}_{\|a_k\|} (\alpha_k)\cap C =\varnothing =\overline{S}_{\|b_k\|} (\beta_k)\cap C$;

\item $\overline{S}_{\|a_i\|} (\alpha_i)\cap \overline{S}_{\|a_j\|} (\alpha_j) =\varnothing$ for all $i\ne j;$

\item $\overline{S}_{\|b_i\|} (\beta_i)\cap \overline{S}_{\|b_j\|} (\beta_j) =\varnothing$ for all $i\ne j;$

\item $\overline{S}_{\|a_i\|} (\alpha_i)\cap \overline{S}_{\|b_j\|} (\beta_j) =\varnothing$ for all $i,j$.

\end{itemize}

For each $D\in \mathcal D$ there exists $\delta_D>0$ such that $\delta_D< r_D$ and
$$B(c_D,\delta_D)\cap ((\cup_{k=1}^\infty  S_{\|a_k\|} (\alpha_k))\cup (\cup_{k=1}^\infty  S_{\|b_k\|} (\beta_k))\cup (\cup_{k=1}^\infty R_k))=\varnothing.$$

  For each $k$, there exist autohomeomorphisms $h^A_k$ and $h^B_k$ of $\mathbb R^2$, supported on $S_{\|a_k\|} (\alpha_k)$ and $S_{\|b_k\|} (\beta_k),$  respectively, such that

\begin{itemize}

 \item $h^A_k (a_k) \in R_k$ and $\| h^A_k(a_k) \| = \| a_k\|$;
  	
 \item $h^A_k (b_k) \in R_k$ and $\| h^B_k(b_k) \| = \| b_k\|$;


  	
 \end{itemize}

  Applying Lemma~\ref{piecewisedefined}, we obtain a homeomorphism $h\colon \mathbb R^2\to R^2$ supported on $S=(\cup_{k=1}^\infty  S_{\|a_k\|} (\alpha_k))\cup (\cup_{k=1}^\infty  S_{\|b_k\|} (\beta_k))$ such that 

\begin{itemize}

\item  $h(a_k)\in R_k$ and $\| a_k\| =\| h(a_k)\|;$

\item $h(b_k)\in R_k$ and $\| b_k\| =\| h(b_k)\|.$

\end{itemize}

For each $k=1,2,\dots$ let $U_k(\epsilon)$ denote an open $\epsilon$-neighbourhood of the segment joining $h(a_k)$ and $h(b_k).$ For each $k$ there exists $\rho_k>0$ such that $ \{B(c_D,\delta_D)\}_{D\in\mathcal D}\cup \{ U_k(\rho_k)\}_{k=1}^\infty$ is a discrete family of subsets of $\mathbb R^2.$ By Lemma~\ref{isotopySLH}, for each $k$ there exists a homeomorphism $g_k\colon \mathbb R^2\to\mathbb R^2,$ supported on $U_k(\rho_k)$, and isotopic to the identity via an isotopy supported on $U_k(\rho_k)$, such that $g_k(h(a_k))= h(b_k).$ By Lemma~\ref{piecewisedefined}, there exists a homeomorphism $g\colon \mathbb R^2\to \mathbb R^2$, supported on $U=\cup_{k=1}^\infty U_k (\rho_k)$, and isotopic to the identity on $U$, such that $g(h(a_k)) = h(b_k)$ for all $k.$

There exists a homeomorphism  $\psi \colon \mathbb R^2\to \mathbb R^2$  that maps each disk $D\in \mathcal D$ onto $B(c_D,\delta_D),$  such that

\begin{itemize}
		
	\item $\psi(c_D)= c_D$ for all $D\in\mathcal D$;
	
	\item $\psi$ is supported on $\cup\{ \mathrm{Int}\, B(C_D, \gamma_D)\colon D\in\mathcal D\}.$
	
\end{itemize}

Let $f=\psi^{-1}  h^{-1} gh\psi.$ It is easy to check that $f$ is the homeomorphism we are looking for.
\end{proof}

Following the ideas of \cite{P}, we say that a space $X$ is \textit{ isotopically strongly discrete homogeneous} if every bijection between  discrete subsets of $X$  can be
extended to an autohomeomorphism of $X$ which is isotopic to the identity.
The next theorem gives a positive answer to a question from \cite{P}.

\begin{thm} A $2$-dimensional connected  manifold $M$ is isotopically strongly discrete homogeneous if and only if it has at most one end.
\end{thm}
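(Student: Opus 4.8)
The plan is to follow the architecture of the proof of Theorem~\ref{manifold_3}, but to replace its general-position step---which fails in dimension two, where two arcs in a surface generically meet---by an application of Lemma~\ref{planewithholes}. The ``only if'' direction is again Proposition~\ref{oneend}. If $M$ has no ends it is compact by Remark~\ref{remark_components}(c), hence strongly discrete homogeneous by Remark~\ref{slh}. So I would assume $M$ has exactly one end and let $A$, $B$ be discrete subsets of $M$ together with a bijection $g\colon A\to B$ to be extended; since $M$ is a metrizable connected manifold it is second countable, so $A$ and $B$ are countable.

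The heart of the argument is to exhibit a planar model of $M$ to which Lemma~\ref{planewithholes} applies verbatim. Invoking the classification of second countable non-compact surfaces, a connected one-ended $2$-manifold is homeomorphic to the surface obtained from $\mathbb R^2$ by deleting the interiors of a discrete family $\mathcal D$ of pairwise disjoint closed disks and attaching a handle or a crosscap along the resulting boundary circles (the circles being paired off for handles). In this model the unique end of $M$ corresponds to infinity in $\mathbb R^2$, and the non-planar pieces march out to that end along the discrete family $\mathcal D$. Writing $\Pi$ for the planar part (the plane $\mathbb R^2$ with the open disks $\Int D$, $D\in\mathcal D$, removed), identified with an open subset of $M$, the crucial feature is that a homeomorphism of $\mathbb R^2$ which is the identity on each disk of $\mathcal D$ induces a homeomorphism of $M$: one simply extends it by the identity across every attached handle and crosscap. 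This is precisely the conclusion that Lemma~\ref{planewithholes} is tailored to deliver.

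With this model in hand, I would first arrange, by a preliminary homeomorphism of $M$ supported near the attached pieces, that $A\cup B\subset\Pi$; as $A\cup B$ and $\mathcal D$ are both discrete, this is routine. The one-end hypothesis then does the essential work. Fixing a metric in which closed bounded sets are compact, any infinite discrete subset of $M$ has no accumulation point and so escapes to the unique end, which in the planar model means escaping to infinity in $\mathbb R^2$; consequently $|B(0,r)\cap(A\cup B)|<\infty$ for every $r>0$, which is exactly the hypothesis of Lemma~\ref{planewithholes}. Applying the lemma to $\mathcal D$, $A$, $B$ and the bijection $g$ produces a homeomorphism $f\colon\mathbb R^2\to\mathbb R^2$ extending $g$ and fixing each disk of $\mathcal D$; extending $f$ by the identity over the handles and crosscaps yields the desired homeomorphism of $M$. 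Note that, unlike in Theorem~\ref{manifold_3}, I would not construct the connecting paths by hand at all---the lemma builds a discrete family of pairwise disjoint paths internally, which is exactly what a na\"\i ve general-position argument cannot supply in a surface.

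I expect the main obstacle to be the structural reduction itself: justifying that a one-ended metrizable $2$-manifold admits such a planar-plus-localized-topology model, with the non-planar pieces confined to a discrete family of disks converging to the single end. This is also where one-endedness is indispensable. With two or more ends, a discrete set could escape along a ``wrong'' direction and accumulate at a finite point of the planar model, destroying the finiteness hypothesis of Lemma~\ref{planewithholes}; this is exactly what occurs for $\mathbb R\times S^1\cong\mathbb R^2\setminus\{0\}$, which fails to be discrete homogeneous. A secondary, more technical point to check is that the preliminary relocation of $A\cup B$ into $\Pi$ and the terminal extension by the identity are compatible with the discreteness bookkeeping, so that the map actually obtained is a homeomorphism.
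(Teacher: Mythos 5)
Your proposal is correct and follows essentially the same route as the paper: the ``only if'' direction via Proposition~\ref{oneend}, the no-ends case via compactness, and in the one-end case the Richards model of $M$ as $\mathbb R^2$ with a discrete family of disks removed and handles/crosscaps attached, a preliminary homeomorphism pushing $A\cup B$ into the planar part, and then Lemma~\ref{planewithholes} extended by the identity over the attached pieces. You in fact spell out two points the paper leaves implicit (why a plane homeomorphism fixing each disk induces one of $M$, and why the finiteness hypothesis $|B(0,r)\cap(A\cup B)|<\infty$ holds), which is a welcome addition rather than a deviation.
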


\begin{proof}
	Since being isotopically strongly discrete homogeneous implies sDH, the ``only if" part follows from Proposition~\ref{oneend}.
	
	 If $M$ has no ends, then it is compact and therefore any discrete subset of $M$ is finite. Hence the statement of the theorem follows from Corollary~\ref{isotopicallyhomogeneousmanifold}. 
	
	 Assume now that $M$ is a $2$-manifold with exactly one end. It follows from Theorem~3 of \cite{R} that $M$ can be obtained from $\mathbb R^2$ as follows. Let $\mathcal D^1_1$, $\mathcal D^2_1$, and $\mathcal D_2$ be families of disks in $\mathbb R^2$ such that $\mathcal D = \mathcal D^1_1 \cup \mathcal D^2_1 \cup \mathcal D_2$ is a discrete (and hence at most countable) family. Additionally,  $\mathcal D^i_1 =\{ D^i_{1,j} \colon j\in J\},$ $i=1,2,$ and $D_2=\{ D_{2,l} \colon l\in L\}$, where each of $J$, $L$ is either finite or countable.  Remove interiors of disks from $\mathcal D$ and glue a handle (i.e. $\mathbb S^1\times [0,1]$) $H_j$ to each pair of boundaries of disks $D^1_{1,j}$ and $D^2_{1,j}$, $j\in J$, and a M\"obius band $M_l$ to  the boundary  of every disk $D_{2,l}$, $l\in L.$ The resulting manifold is $M$.

	For each $\epsilon >0$ and $D\in\mathcal D$ let $D(\epsilon)$ denote the closed $\epsilon$-neighbourhood of the boundary of $D$ in $\mathbb R^2\setminus\cup \{ \mathrm{Int}\, D\colon D\in\mathcal D\}$. Further, for each $\epsilon >0$ and for each $j\in J$, $l\in L$ let $H_j(\epsilon) = H_j\cup D^1_{1,j} (\epsilon)\cup D^2_{1,j}(\epsilon),$ and $M_l(\epsilon) = M_l \cup D_{2,l} (\epsilon),$ respectively. Note that $H_j(\epsilon)$ is homeomorphic to $H_j$ and $M_l(\epsilon)$ is homeomorphic to $M_l$ for all $j\in J$, $l\in L$. 
	
	For each $j\in J$, $l\in L$ there exist $\epsilon_j$ and $\epsilon_l$ such that:
	
	\begin{itemize}
		\item the family $\{ D^1_{1,j}(\epsilon_j), D^2_{1,j}(\epsilon_j)\colon j\in J\} \cup \{ D_{2,l} (\epsilon_l)\colon l\in L\}$ is discrete in $M$;
		\item for all $j$, $l,$ the interiors (in $\mathbb R^2$) $\mathrm{Int}\, D^1_{1,j}(\epsilon_j)$, $\mathrm{Int}\, D^2_{1,j}(\epsilon_j)$, $\mathrm{Int}\, D_{2,l} (\epsilon_l)$ do not contain points from $A\cup B$.		
	\end{itemize}

Let $A$ and $B$ be two discrete countable subsets of $M.$ Applying Corollary~\ref{isotopicallyhomogeneousmanifold} to each manifold with boundary $H_j(\epsilon_j)$, $M_l(\epsilon_l)$, for each $j$ and $l$ we obtain homeomorphisms $f_j\colon H_j(\epsilon_j)\to H_j(\epsilon_j)$ and $f_l\colon M_l(\epsilon_l)\to M_l(\epsilon_l)$, such that

\begin{itemize}
	\item $f_j$ is isotopic to the identity via an isotopy supported on $H_j(\epsilon_j)\setminus (\mathrm{Bd}\, D^1_{1,j}(\epsilon_j)\cup \mathrm{Bd}\, D^2_{1,j}(\epsilon_j)) $;

	\item $f_l$ is  isotopic to the identity via an isotopy supported on $M_l(\epsilon_l)\setminus (\mathrm{Bd}\, D_{2,l}(\epsilon_l))$;
	
	\item $f_j((A\cup B)\cap H_j)\subset \mathrm{Int}\, D^1_{1,j}(\epsilon_j)$;

	\item $f_l((A\cup B)\cap M_l)\subset \mathrm{Int}\, D_{2,l}(\epsilon_l)$.

\end{itemize}

Next, using Lemma~\ref{piecewisedefined}, we obtain a homeomorphism $f\colon M\to M$, which is isotopic to the identity and such that $f(A\cup B)\subset \mathbb R^2 \setminus \bigcup \mathcal D.$  Now the statement of the theorem follows from Lemma~\ref{planewithholes}.
	\end{proof}

\begin{cor} A $2$-dimensional connected  manifold $M$ is strongly discrete homogeneous if and only if it has at most one end.
\end{cor}

\begin{prob} Let $X$ be a metrizable strongly locally homogeneous generalized continuum with one end. Is $X$  strongly discrete homogeneous or discrete homogeneous?
	\end{prob}

\section{Acknowledgements}

After this paper has been posted on arXiv.org, it was brought to our attention that the result stated in Theorem~\ref{manifold_3} has been obtained in~\cite{P}. We are grateful to Riccardo Piergallini who pointed this out. We also are grateful to the anonymous referee for valuable suggestions that significantly improved the manuscript.


\vskip1cm

\noindent(V.A. Chatyrko)\\
Department of Mathematics, Linkoping University, 581 83 Linkoping, Sweden.\\
vitalij.tjatyrko@liu.se

\vskip0.3cm
\noindent(A. Karassev) \\
Department of Mathematics,
Nipissing University, North Bay, Canada\\
alexandk@nipissingu.ca

\end{document}